\documentclass[letterpaper, 10 pt, conference]{ieeeconf}  

\IEEEoverridecommandlockouts                              

\usepackage{graphics} 
\usepackage{epsfig} 
\usepackage{amsmath} 
\usepackage{amssymb}  

\usepackage{amsfonts} 
\usepackage{mathdots}
\allowdisplaybreaks
\usepackage{stfloats}

\usepackage{xcolor}
\usepackage{cite}

\usepackage{latexsym,theorem}

{\theorembodyfont{\slshape}\newtheorem{theorem}{Theorem}[section]}
{\theorembodyfont{\slshape}}
{\theorembodyfont{\slshape}\newtheorem{lemma}[theorem]{Lemma}}
{\theorembodyfont{\slshape}}
{\theorembodyfont{\slshape}}
{\theorembodyfont{\upshape}}
{\theorembodyfont{\upshape}\newtheorem{problem}[theorem]{Problem}}
{\theorembodyfont{\upshape}\newtheorem{remark}[theorem]{Remark}}
{\theorembodyfont{\upshape}\newtheorem{assumption}[theorem]{Assumption}}
{\theorembodyfont{\upshape}}

\title{\LARGE \bf
Output-Feedback Controller Synthesis for Dissipativity and $H_2$ Performance  of Autoregressive Systems from Noisy Input-Output Data
}

\author{Pietro Kristović$^{1}$, Andrej Jokić$^{1}$ and Mircea Lazar$^{2}$
\thanks{$^{1}$ Faculty of Mechanical Engineering and Naval Architecture, University of Zagreb, Zagreb, Croatia, {\tt\small pietro.kristovic@fsb.unizg.hr, andrej.jokic@fsb.unizg.hr}}
\thanks{$^{2}$ Electrical Engineering Faculty, Eindhoven University of Technology, Eindhoven, The Netherlands
        {\tt\small m.lazar@tue.nl}}
\thanks{This research has been supported by the European Regional Development Fund under grant agreement PK.1.1.10.0007 (DATACROSS).}
}

\begin{document}

\maketitle
\thispagestyle{empty}
\pagestyle{empty}

\begin{abstract}
In this paper we propose a data-driven output-feedback controller synthesis method for discrete-time linear time-invariant  systems in the form of autoregressive model with exogenous input.
The synthesis goal is either to achieve dissipativity with respect to a given quadratic supply rate, or to achieve given $H_2$ performance level. It is assumed that the model of the plant is unknown, except for the disturbance term. To compensate for the lack of model knowledge, we have a recorded trajectory of the controlled input and the output available for control, which can be corrupted by an unknown but bounded disturbance. 
Derived controller synthesis method is in the form of linear matrix inequalities and is nonconservative within the considered problem setting.
\end{abstract}

\section{INTRODUCTION}
\label{IntroductionLabel}

In this paper, we propose a dynamic output-feedback controller synthesis method for discrete-time linear time-invariant (LTI) systems. We consider a class of LTI systems which can be expressed in the form of autoregressive (AR) model with exogenous input
\begin{equation}
\label{P_ARModel}
\begin{split}
& y(t)+A_1(y-1)+\cdots+A_l(y-l)= \\
& B_0u(t)+B_1u(t-1)+\cdots+B_lu(t-l)+B_ww(t),
\end{split}
\end{equation}
where $y$ is the measured output for control purposes, $u$ is the
control input, and $w$ is the disturbance input which affects the system in a specific way, more precisely, with no associated dynamics.
The goal of the synthesis is either to render the closed-loop system  dissipative with respect to a given quadratic supply function or to achieve a given $H_2$ performance level. 
The presented approach belongs to the class of direct data-driven approaches, where trajectory data is directly used in controller synthesis, bypassing a reconstruction of the system model.
More precisely, developed controller synthesis
follows the \textit{informativity approach} framework, which has gained a significant attention during past years, e.g. see
\cite{R27} for an overview.

Data-driven dissipativity analysis
has been considered in
\cite{996, 995, 994, 993}, while in  \cite{R26, c4, c6, R5564636, R24, zz1, zz66}  data-driven approaches were used to derive (stabilizing, $H_2$, $H_\infty$, quadratic performance) static state-feedback controller synthesis methods using input-state trajectories.
In contrast, 
the input-state data was used to derive dynamic output-feedback controller synthesis methods for dissipativity, $H_2$ and $H_\infty$ performance in \cite{999,66999}.
In this paper we are concerned with a dynamic output-feedback controller synthesis based on the input-output data. There are several approaches which deal within the same problem setting for deriving
(stabilizing, $H_2$, $H_\infty$, quadratic performance) controller synthesis  \cite{R26}, \cite{c5, a1, zz2, R23}. They all consider systems that belong to the same class of AR system \eqref{P_ARModel}.

The prevailing strategy in the existing solutions reported in the literature is to rewrite the AR model into a specific state-space form in which the state vector is composed of the set of time shifted control inputs and outputs available for control. In this way the knowledge of input-output data becomes the input-state data, while the controller synthesis problem is reduced to the static-state feedback problem, what allows one to use some of the existing state-feedback solutions.
However, it is recognized that the obtained synthesis problem in general cannot be simply solved by relying on this strategy \cite{c5, R33, R22}. There are two main challenges: 
\begin{itemize}
\item[\emph{i})] the corresponding state-space realization of an AR system is highly structured and if this is ignored the resulting solutions might be overly conservative;
\item[\emph{ii})] there are technical requirements on the recorded data (e.g. full rank condition) which are instrumental for employment of the so-called strict matrix S-lemma (or similar tools) on a path to devise a synthesis conditions for the \emph{set} of systems consistent with the data.
\end{itemize}
Some issues regarding \emph{(ii)} are in some more detail presented next.

For a given system in AR form \eqref{P_ARModel}, let $p$ denote the number of outputs available for control ($y\in \mathbb{R}^p$), and let $n$ and $l$ denote the order and the lag associated with a minimal state-space realization of the system, respectively. It turns out that the state-feedback solutions are applicable to the cases when $n=pl$. For convenience, in the remainder we will call such cases the \emph{restricted} cases while we refer to the condition $n=pl$ as the \emph{restriction}.

As shown in \cite{R33}, the above described restriction is directly related to uncontrollability of the space-state realization and rank deficiency of the associated state data matrix.
Within this context, we can distinguish:
\begin{itemize}
\item  methods in \cite{R26, c5}, which are restricted to the class of systems defined by relation $n=pl$.
\item  
methods in \cite{a1,zz2, R23}, in which the restriction $n=pl$ is bypassed by
assuming that the state\footnote{In \cite{R23} the behavioural framework is used 
and a state vector is not introduced, however full row rank condition is made on the data matrix which has the same structure as the state data matrix.}
 data matrix has full row rank.
In case $n=pl$, this rank condition can be obtained by using input which
is persistently exciting  of sufficiently high order.
In case $n<pl$, this rank condition can  be achieved only if the recorded state trajectories (the data) also span the complete uncontrollable subspace. The latter could in principle be achieved by using specific initial conditions.
\end{itemize}
Furthermore, the restriction to the class of systems for which $n=pl$ can be resolved by constructing special non-minimal realization using the data as shown in \cite{R33}.
In case signal-to-noise ratio is sufficiently large, newly obtained system realization can be used for  system stabilization based on state-feedback methods \cite{c6, R5564636}. However, it is unclear how to define and to interpret disturbance term in 
this new
realization of system dynamics and therefore adding input-output performance criteria, e.g., $H_\infty$ or $H_2$ (in addition to
stabilization) also remains unclear.
The restriction is also resolved in \cite{R22}, where system stabilization based on data corrupted by measurement error is considered. This is achieved by constructing an auxiliary system which increases the size of the state-space realization matrices, but ensures that the state data matrix has full row rank and $n=pl$.
However, in this approach it is not clear how to systematically construct the required auxiliary system, as recognized in \cite[Section~VI]{R33}.

Within the considered problem setting (AR model, dissipativity or $H_2$ performance), and
to the best of our knowledge, in this paper for the first time a dynamic output-feedback controller synthesis method is presented which simultaneously resolves the above presented challenges (\emph{i}) and (\emph{ii}). More precisely, regarding (\emph{i}), we fully exploit the structure, in contrast to \cite{R33} and \cite{R22}. With respect to (\emph{ii}), our approach is to construct a special data-driven system realization, similar to \cite{R33}. 
We exploit the fact that the state vector can remain in some (invariant) subspace (which might not be the complete state-space) even though the input is persistently exciting of 
an arbitrarily large
order. 
In this  way we obtain a new data-driven system realization by projecting 
 the non-minimal realization onto
the reachable subspace defined by the recorded data,  and thus ensure that the new state data matrix has a full row rank.
  The controller synthesis method is formulated in terms of linear matrix inequalities (LMIs) and is non-conservative,
i.e. LMIs represent necessary and sufficient conditions
for a set of systems defined by the recorded data and the known disturbance bound. 
The LMIs are more compact (smaller in size) compared to those in \cite{zz2}, and  are in form of strict inequalities which can be numerically verified. To illustrate results, we use a numerical example from \cite{R22} for which $n<pl$.

\subsubsection*{Notation} 
We use $\mathbb{R}$, $\mathbb{N}_{\geq 0}$, $\mathbb{R}^n$ and  $\mathbb{R}^{m\times n }$ to denote a field of real numbers, a set of natural numbers, an $n$-dimensional vectors with elements in $\mathbb{R}$, and $m$ by $n$ matrices with elements in $\mathbb{R}$, respectively.
For a real square matrix $M$ we use $\text{He}\{M\}$ to denote a symmetric matrix $M^\top + M$.
We use $M^\dagger$ to denote the Moore-Penrose pseudo-inverse of a real matrix $M$.
 We use $\text{im}(M)$, $\text{ker}(M)$, $\text{tr}(M)$ and $\text{rank}(M)$ to denote the image, kernel space, trace and rank of matrix $M$, respectively. We use $\text{diag}(A,B)$ to denote the matrix \begin{small}$\begin{pmatrix} A & 0 \\ 0 & B  \end{pmatrix}$\end{small}.  
In an LMI, $\star$ represents blocks which can be inferred from symmetry. We use  $\mathcal{C}(A,B)$ to denote the corresponding  controllability matrix, where $A$ and $B$ are the state and input matrix, respectively.
When replacing a matrix inequality of the form \begin{small}$\begin{pmatrix} A & B \\ B^\top & C \end{pmatrix}\prec 0$\end{small} with a set of inequalities $C \prec 0$, $A-BC^{-1}B^\top \prec 0$ (and vice versa), we will say that we have \emph{used the Schur complement with respect to the matrix $C$}. In such a way we explicitly specify which block diagonal matrix is inverted. We use $I_p$ and $0_r$ to denote identity matrix of size $p$ and square zero matrix of size $r$, respectively.
When we say that we apply a congruence transformation on
a matrix inequality $A\succ  0$ with respect to the matrix $S$, we
mean that $S$ is a full rank matrix, and that the transformed
inequality has the form $S^\top AS \succ 0$.
With matrices
$A_1, . . . , A_n$ which have the same number of columns, we
use $\text{col}(A_1, . . . , A_n)$ to denote the matrix $\begin{pmatrix} A_1^\top \cdots A_n^\top \end{pmatrix}^\top$.

\section{Preliminaries}
In this section we recall some definitions and results which will be instrumental in the remainder of the paper. 

\subsection{Dissipativity}
\label{SecDissipativity}
Consider a discrete-time LTI system 
\begin{equation}
\label{Sys1}
\begin{pmatrix} x(t+1) \\ z(t) \end{pmatrix} =
\begin{pmatrix} A & B \\ C & D \end{pmatrix}
\begin{pmatrix} x(t) \\  w(t) \end{pmatrix} 
\end{equation}
where $t \in \mathbb{N}_{\geq 0}$, $x(t) \in \mathbb{R}^n$, $w(t) \in \mathbb{R}^{m_w}$ and $z(t) \in \mathbb{R}^{p_z}$ represent  time, the system state, input and output, respectively. We say that the system \eqref{Sys1} is strictly dissipative with respect to a supply function $s(w(t),z(t))$ if there exists a storage function $V:\mathbb{R}^n \rightarrow \mathbb{R}$ and $\epsilon > 0$ such that the strict dissipation inequality
\begin{equation}
\label{Dissip1}
V(x(t))+s(w(t),z(t))- \epsilon \|\text{col}(x(t),w(t))\|^2 \geq V(x(t+1)) 
\end{equation}
holds for all $t \in \mathbb{N}_{\geq 0}$ and all trajectories $(w,x,z)$ of the system \eqref{Sys1}. With a quadratic storage function $V(x(t))=x(t)^\top P x(t)$ and a quadratic supply function
\begin{equation}
\label{supplyFunction}
s(w(t),z(t))=\begin{pmatrix} w(t) \\ z(t) \end{pmatrix}^\top \begin{pmatrix} -Q & -S \\ -S^\top & -R \end{pmatrix} \begin{pmatrix} w(t) \\ z(t) \end{pmatrix},
\end{equation}
the strict dissipation inequality \eqref{Dissip1} is equivalent to feasibility of the following LMI
\begin{equation}
\label{Dissip2}
\begin{pmatrix}
I & 0 \\
A & B \\
0 & I\\
C & D  \\
\end{pmatrix}^\top
\begin{pmatrix}
-P & 0 & 0 & 0  \\
0& P & 0 & 0   \\
0 & 0 & Q & S \\
0 & 0 & S^\top & R\\
\end{pmatrix}
\begin{pmatrix}
I & 0 \\
A & B \\
0 & I\\
C & D  \\
\end{pmatrix}
\prec 0.
\end{equation}
If in addition we have $P\succ 0$ and $R \succeq 0$, then \eqref{Dissip2} implies the stability of system \eqref{Sys1} since the Lyapunov inequality $A^\top P A - P \prec 0$ is incorporated in \eqref{Dissip2}. 

\begin{remark}
\label{RemarkPrvi}
The channel $w  \rightarrow z $ achieves $H_\infty$ performance of at least $\gamma $ (with $\gamma >0$) if and only if \eqref{Dissip2} holds for
$Q=-\gamma^2 I_m$,  $S = 0$, $R = I_{p_z}$ and some $P \succ 0$.
\end{remark}

\subsection{$H_2$ performance}
\label{H2performanceSubsection}
Consider a discrete time LTI system \eqref{Sys1}. Its transfer function matrix for the channel $w \rightarrow z$ is given by the equation
\begin{equation}
\nonumber
T(e^{j\omega})=C (e^{j\omega}I-A)^{-1}B+D,
\end{equation}
and its $H_2$ norm is denoted by $||T(e^{j\omega})||_2$.
According to \cite{zz00001}, $||T(e^{j\omega})||_2 < \mu $ if and only if  $\text{tr}(Z)< \mu^2$ and
\begin{equation}
\label{H2LMI}
\begin{pmatrix}
P-A P A^\top & B \\
B^\top &  I
\end{pmatrix}
 \succ 0, 
 \quad 
 \begin{pmatrix}
Z-DD^\top  & CP \\
P  C^\top &  P
\end{pmatrix} \succ 0.
\end{equation}
Note that \eqref{H2LMI} implies stability of the system \eqref{Sys1} since Lyapunov inequalities are incorporated in \eqref{H2LMI}.

\subsection{Matrix S-lemma}
The following version of the matrix S-lemma has been presented in \cite[Theorem 4.10]{c21}. 
\begin{theorem}
\label{Theorem1}
Let $M, H \in \mathbb{R}^{(p+r)\times (p+r)}$ be symmetric matrices and consider the partition \begin{small}$H=\begin{pmatrix} H_{11} & H_{12} \\ H_{12}^\top & H_{22} \end{pmatrix}$\end{small} where $H_{11}\in \mathbb{R}^{p \times p}$. Let the set $S_{H}$ be defined as
\begin{equation}
S_{H}:=\{ Z\in\mathbb{R}^{r\times p} | \begin{pmatrix} I \\ Z \end{pmatrix}^\top H \begin{pmatrix} I \\ Z \end{pmatrix} \succeq 0 \}.
\end{equation}
Assume\footnote{Note that in \cite[Theorem~4.10]{c21} it is in fact required that $H_{11}-H_{12}H_{22}^\dagger H_{12}^\top \succeq 0$ (i.e., the generalized Schur complement of $H$ with respect to $H_{22}$ is positive semidefinite). This condition is equivalent to the condition that $S_{H}$ is not an empty set, as shown  in \cite[page 6]{c21}.} that $H_{22}\prec 0$ and $S_{H} \neq \emptyset$.
Then, we have that
\begin{equation}
\begin{pmatrix}
I \\
Z
\end{pmatrix}^\top
M
\begin{pmatrix}
I \\
Z
\end{pmatrix}\succ 0 \text{ for all } Z\in S_{H}
\end{equation}
if and only if there exists a scalar $\alpha \geq 0$ such that
\begin{equation}
M-\alpha
H \succ
0. 
\end{equation}
\end{theorem}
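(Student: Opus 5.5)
The "if" direction is immediate, so I will dispose of it first. Suppose $M-\alpha H\succ 0$ for some $\alpha\ge 0$ and let $Z\in S_H$. Put $W=\mathrm{col}(I,Z)$, which has full column rank. Then
\begin{equation*}
W^\top M W = W^\top (M-\alpha H) W + \alpha\, W^\top H W \succ 0 .
\end{equation*}
The first term is positive definite by congruence with a full column rank matrix, and the second is positive semidefinite because $\alpha\ge 0$ and $Z\in S_H$.

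For the "only if" direction I will reduce to a scalar S-lemma and then use a convexity argument. The first step is to describe $S_H$ explicitly using $H_{22}\prec 0$. Completing the square gives
\begin{equation*}
W^\top H W = (H_{11}-H_{12}H_{22}^{-1}H_{12}^\top) + (Z+H_{22}^{-1}H_{12}^\top)^\top H_{22}(Z+H_{22}^{-1}H_{12}^\top).
\end{equation*}
Hence $S_H$ is a matrix ellipsoid centred at $Z_c=-H_{22}^{-1}H_{12}^\top$:
\begin{equation*}
S_H=\{Z_c+(-H_{22})^{-1/2}\Upsilon N^{1/2} : \Upsilon^\top\Upsilon\preceq I\}, \qquad N:=H_{11}-H_{12}H_{22}^{-1}H_{12}^\top\succeq 0 .
\end{equation*}
Nonemptiness of $S_H$ is exactly the condition $N\succeq 0$. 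Next I apply a congruence with $T=\begin{pmatrix} I & 0\\ Z_c & I\end{pmatrix}$, which changes neither the hypothesis nor the conclusion. After this change of variables we may assume $H_{12}=0$, so that $H=\mathrm{diag}(N,H_{22})$.

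The second step is to restate the hypothesis on $M$ as a statement about vectors. Fix $x\in\mathbb{R}^p$ and $y\in\mathbb{R}^r$, and consider $v=\mathrm{col}(x,y)$. The claim is that
\begin{equation*}
v^\top H v\ge 0,\ v\neq 0 \implies v^\top M v>0 .
\end{equation*}
To see this for $x\neq 0$, note that $v^\top Hv\ge 0$ means $y^\top(-H_{22})y\le x^\top N x$. Then I can choose $Z\in S_H$ with $Zx=y$ by taking a rank-one $\Upsilon$ that maps $N^{1/2}x$ to $(-H_{22})^{1/2}y$ with norm at most one; if $N^{1/2}x=0$, then $y=0$ and $Z=0$ works. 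With this $Z$ we get $v^\top Mv = x^\top W^\top M W x>0$. The case $x=0$ is more delicate. There $v^\top Hv\ge 0$ forces $y=0$ because $H_{22}\prec 0$, so no nonzero $v$ with $x=0$ lies in the cone and the claim holds vacuously.

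The third step is the scalar S-lemma. The quadratic form $v^\top Hv$ is strictly positive somewhere unless $N=0$. If $N=0$, then $S_H=\{0\}$, and I can pick $\alpha$ large directly: $M_{11}\succ 0$ holds, and a Schur-type argument using $H=\mathrm{diag}(0,H_{22})$ with $H_{22}\prec 0$ shows that $M-\alpha H\succ 0$ for $\alpha$ large. If $N\neq 0$, Yakubovich's S-lemma in its strict form applies (Slater's condition holds) and yields $\alpha\ge 0$ with $v^\top(M-\alpha H)v\ge 0$ for all $v$. I then need strictness. I will first try to get it by compactness: on the unit sphere the set $\{v^\top Hv\ge 0\}$ is compact, so $M$ is positive there with a uniform margin $\epsilon$. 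I would apply the S-lemma to $M-\epsilon' I$ for small $\epsilon'$ and then perturb $\alpha$ slightly.

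I expect this strictness step to be the main obstacle: upgrading the nonstrict multiplier to $M-\alpha H\succ 0$. I would handle it with the uniform margin argument just described. If that fails, I would fall back on Finsler's lemma, using that $H$ restricted to the $y$-block is negative definite.
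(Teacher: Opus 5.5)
Your argument is correct. It necessarily takes a different route from the paper, because the paper gives no proof: Theorem~\ref{Theorem1} is imported from \cite[Theorem~4.10]{c21}, and only the footnote translates the generalized Schur-complement hypothesis used there into $S_H\neq\emptyset$.

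You instead give a self-contained reduction to the classical vector S-lemma:
\begin{itemize}
\item The congruence with $T$ moves the centre $Z_c=-H_{22}^{-1}H_{12}^\top$ to the origin, giving $H=\mathrm{diag}(N,H_{22})$ with $N\succeq 0$. This also recovers the footnote's equivalence.
\item The rank-one choice $Z=(-H_{22})^{-1/2}\Upsilon N^{1/2}$ turns the matrix hypothesis into $v^\top Mv>0$ on $\{v\neq 0 : v^\top Hv\ge 0\}$. For this you only need the easy inclusion $Z^\top(-H_{22})Z=N^{1/2}\Upsilon^\top\Upsilon N^{1/2}\preceq N$, not the full ellipsoid description of $S_H$.
\item $H_{22}\prec 0$ enters exactly by excluding nonzero vectors $\mathrm{col}(0,y)$ from that cone. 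That exclusion is what makes a positive definite multiplier inequality $M-\alpha H\succ 0$ attainable.
\end{itemize}

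The step you flag as the main obstacle is not one. Set $\epsilon:=\min\{v^\top Mv : \|v\|=1,\ v^\top Hv\ge 0\}$; this is positive because the set is compact and nonempty when $N\neq 0$. Homogeneity then gives $v^\top(M-\epsilon I)v\ge 0$ on the whole cone. The ordinary nonstrict S-lemma applies under Slater's condition, which holds since $N\neq 0$. It yields $\alpha\ge 0$ with $M-\epsilon I-\alpha H\succeq 0$, i.e.\ $M-\alpha H\succeq\epsilon I\succ 0$ directly. So neither a perturbation of $\alpha$ nor Finsler's lemma is needed. What you call the ``strict form'' of the S-lemma is really the nonstrict one under strict feasibility.

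Your separate Schur-complement treatment of the case $N=0$, where Slater fails and $S_H$ reduces to a single point, is correct and is genuinely needed.

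As for the trade-off: the citation buys brevity and inherits the more general framework of \cite{c21}. Your route buys a transparent, self-contained proof that shows precisely where $H_{22}\prec 0$ and $S_H\neq\emptyset$ are used. Its price is invoking Yakubovich's S-lemma as a black box.
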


\subsection{Dualization lemma}
The following lemma is commonly known as the dualization lemma \cite{R25}. 
\begin{lemma}
\label{Lemma2}  
Let $\Delta$ be a non-singular symmetric matrix in $\mathbb{R}^{n\times n}$, and let $\mathcal{U}$, $\mathcal{V}$ be 
two complementary subspaces whose sum equals  $\mathbb{R}^n$. Then
\begin{equation}
x^\top \Delta x<0 \hspace{0.7em} \forall x\in \mathcal{U}  \setminus \{0\} \text{\, and \,} x^\top \Delta x\geq 0 \hspace{0.7em} \forall x\in \mathcal{V}  \nonumber
\end{equation}
is equivalent to 
\begin{equation}
x^\top \Delta^{-1}x>0 \hspace{0.7em} \forall x\in \mathcal{U}^\perp  \setminus \{0\} \text{\, and  \,} x^\top \Delta^{-1}x\leq 0 \hspace{0.7em} \forall x\in \mathcal{V}^\perp, \nonumber
\end{equation}
where $\mathcal{U}^\perp$ and $\mathcal{V}^\perp$ are subspaces orthogonal to $\mathcal{U}$ and $\mathcal{V}$, respectively.
\end{lemma}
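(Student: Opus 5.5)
The plan is a direct linear-algebra argument built on the Sylvester inertia of $\Delta$. The main device is the map $x\mapsto y=\Delta^{-1}x$, which turns the quadratic form of $\Delta^{-1}$ into that of $\Delta$. Throughout, let $k=\dim\mathcal{U}$, so that $\dim\mathcal{V}=n-k$, $\dim\mathcal{U}^\perp=n-k$, $\dim\mathcal{V}^\perp=k$, and $\mathcal{U}^\perp,\mathcal{V}^\perp$ are again complementary.

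\emph{Forward direction.} First I fix the inertia of $\Delta$.
\begin{itemize}
\item Since $\Delta$ is negative definite on $\mathcal{U}$, it has at least $k$ negative eigenvalues.
\item Since $\Delta\succeq 0$ on $\mathcal{V}$, with $\dim\mathcal{V}=n-k$, and the maximal dimension of a subspace on which $\Delta$ is nonnegative equals $n$ minus the number of negative eigenvalues, there are at most $k$ negative eigenvalues.
\item Nonsingularity then gives exactly $k$ negative and $n-k$ positive eigenvalues.
\end{itemize}

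Next take $x\in\mathcal{U}^\perp\setminus\{0\}$ and put $y=\Delta^{-1}x\neq 0$. For every $u\in\mathcal{U}$ we have $u^\top\Delta y=u^\top x=0$, so $y$ lies in the $\Delta$-orthogonal complement $\mathcal{W}:=\Delta^{-1}\mathcal{U}^\perp$, which has dimension $n-k$. I claim $\mathcal{W}\cap\mathcal{U}=\{0\}$: a common vector $w$ would satisfy $w^\top\Delta w=0$, forcing $w=0$ by negativity on $\mathcal{U}$. Hence $\mathbb{R}^n=\mathcal{U}\oplus\mathcal{W}$, and in a basis adapted to this splitting $\Delta$ is block diagonal. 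The $\mathcal{U}$-block already accounts for all $k$ negative eigenvalues, so by Sylvester's law of inertia $\Delta$ must be positive definite on $\mathcal{W}$. Therefore
\begin{equation*}
x^\top\Delta^{-1}x=y^\top\Delta y>0 .
\end{equation*}

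For $x\in\mathcal{V}^\perp$, again set $y=\Delta^{-1}x$, which is $\Delta$-orthogonal to $\mathcal{V}$. Suppose, for contradiction, that $y^\top\Delta y>0$. Then $y\notin\mathcal{V}$, since otherwise $y^\top\Delta y=0$. Moreover, for $v\in\mathcal{V}$ and $t\in\mathbb{R}$,
\begin{equation*}
(v+ty)^\top\Delta(v+ty)=v^\top\Delta v+t^2y^\top\Delta y\geq 0 .
\end{equation*}
So $\Delta\succeq 0$ on $\mathcal{V}+\text{span}\{y\}$, a subspace of dimension $n-k+1$. Such a subspace must intersect $\mathcal{U}$ (of dimension $k$) nontrivially, contradicting negativity on $\mathcal{U}$. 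Hence $x^\top\Delta^{-1}x\leq 0$.

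\emph{Converse direction.} I apply the forward direction to the nonsingular symmetric matrix $-\Delta^{-1}$ with the complementary pair $(\mathcal{U}^\perp,\mathcal{V}^\perp)$. The assumed conditions say exactly that $-\Delta^{-1}$ is negative definite on $\mathcal{U}^\perp$ and nonnegative on $\mathcal{V}^\perp$. The forward direction then yields
\begin{equation*}
x^\top(-\Delta)x>0 \text{ on } \mathcal{U}\setminus\{0\},\qquad x^\top(-\Delta)x\leq 0 \text{ on } \mathcal{V},
\end{equation*}
using $(\mathcal{U}^\perp)^\perp=\mathcal{U}$ and $(\mathcal{V}^\perp)^\perp=\mathcal{V}$. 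This is precisely the original pair of conditions on $\Delta$.

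The main obstacle is the inertia bookkeeping, in particular justifying positivity on $\mathcal{W}$ from the block-diagonal splitting and the dimension-counting contradiction for the semidefinite part. Everything else is a direct substitution.
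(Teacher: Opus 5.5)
Your argument is correct and complete. The paper does not prove this lemma itself; it quotes the dualization lemma from \cite{R25}. So the useful comparison is with the standard proof of that lemma. That proof shares two things with yours: the inertia count $n_-=k$, and the symmetry reduction for the converse, i.e.\ applying the forward implication to $-\Delta^{-1}$ with the complementary pair $(\mathcal{U}^\perp,\mathcal{V}^\perp)$. It differs from yours in both core steps.

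For positivity on $\mathcal{U}^\perp$, the usual proof argues by contradiction. If some $y\in\mathcal{U}^\perp\setminus\{0\}$ had $y^\top\Delta^{-1}y\le 0$, then $\Delta$ would be negative semidefinite on the $(k+1)$-dimensional space $\mathcal{U}+\text{span}\{\Delta^{-1}y\}$, which contradicts $n_-=k$ and nonsingularity. You instead build the $\Delta$-orthogonal splitting $\mathbb{R}^n=\mathcal{U}\oplus\Delta^{-1}\mathcal{U}^\perp$ and read positivity directly off Sylvester's law. This gives slightly more: the $\Delta$-orthogonal complement of $\mathcal{U}$ is a positive definite subspace.

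For the semidefinite part on $\mathcal{V}^\perp$, the usual proof perturbs $\Delta$ to $\Delta+\epsilon I$. For small $\epsilon>0$ this matrix is strictly positive on $\mathcal{V}$ and still negative definite on $\mathcal{U}$. One applies the already-proved strict case to it and lets $\epsilon\to 0$, using continuity of the inverse. Your mirrored dimension count replaces this limit with a purely algebraic step: you enlarge $\mathcal{V}$ by $y=\Delta^{-1}x$ and force a nontrivial intersection with $\mathcal{U}$. The cost is a second, separate argument instead of reusing the strict case.

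The one fact you assert without justification, that $\mathcal{U}^\perp$ and $\mathcal{V}^\perp$ are complementary, follows at once from $\mathcal{U}^\perp\cap\mathcal{V}^\perp=(\mathcal{U}+\mathcal{V})^\perp=\{0\}$ together with $\dim\mathcal{U}^\perp+\dim\mathcal{V}^\perp=n$.
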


\section{Problem definition}
\label{Problem definition label}

\subsection{The plant, the controller and the state-space realization of the closed-loop system}
Consider the following class of discrete-time LTI system 
\begin{equation}
\label{ARM}
A(q^{-1})y(t)=B(q^{-1})u(t)+B_w w(t),
\end{equation}
where $y (t) \in \mathbb{R}^p$ is the measured output available for control, $u(t) \in \mathbb{R}^m$ is the control input, $w(t) \in \mathbb{R}^{m_w}$ is the disturbance input, and $q^{-1}$ represents the delay operator, i.e., $q^{-1}y(t)=y(t-1)$.
Let $n$ be the order of a minimal realization of the system \eqref{ARM}.
We assume that the matrix $B_w\in \mathbb{R}^{p\times m_w}$ is a full column rank matrix, and the matrices $A(\xi)\in\mathbb{R}^{p \times p} [\xi]$ and $B(\xi)\in\mathbb{R}^{p \times m} [\xi]$ are
matrices of polynomials in the indeterminate $\xi$, that is
\begin{equation}
\begin{split}
A(\xi)&:=I+A_1\xi+A_2\xi^2+\cdots+A_l\xi^l, \\
B(\xi)&:=B_0+B_1\xi+B_2\xi^2+\cdots+B_l\xi^l. \nonumber
\end{split}
\end{equation}

The system \eqref{ARM} can be represented in the following state-space form
\begin{subequations}
\label{eq:3:11}
\begin{align}
\chi (t+1)&=
A_z \chi(t)+B_z u(t)+\hat{B} w(t), \label{eq:3:11-A} \\ 
y(t) &= 
\begin{pmatrix}
 \bar{A} & \bar{B} 
 \end{pmatrix}
 \chi(t)+B_0u(t)+B_w w(t), \label{eq:3:11-B}
 \end{align}
\end{subequations} 
 where 
 \begin{equation}
 \label{ChiDefinition}
\chi(t):=\text{col} (y(t-1),..., y(t-l), u(t-1),..., u(t-l))
\end{equation} 
is the system state, and 
 \begin{equation}
 \label{ExtendedStateRealization}
 \begin{split}
A_z&:= \begin{pmatrix}
\bar{A} & \bar{B} \\ \hline
\begin{pmatrix} I_{p(l-1)} & 0 \end{pmatrix} & 0 \\
0 & 0 \\
 0 & \begin{pmatrix} I_{m(l-1)} & 0 \end{pmatrix}
 \end{pmatrix}:=
 \begin{pmatrix}
\begin{matrix} \bar{A} & \bar{B}  \end{matrix} \\  \hline
J_{A_z}
 \end{pmatrix}, 
  \\
 B_z &:=\begin{pmatrix}
B_0 \\ \hline 0 \\ I_m \\0
\end{pmatrix}:=
\begin{pmatrix}
B_0 \\  \hline
J_{B_z}
 \end{pmatrix},
 \quad
\hat{B}:=
\begin{pmatrix}
B_w \\ 0 \\ 0 \\ 0
\end{pmatrix}, \\
 \bar{A}&:=
 \begin{pmatrix}
 -A_1 & -A_2 & \cdots &  -A_l 
 \end{pmatrix}, \\
  \bar{B}&:=
 \begin{pmatrix}
 B_1 & B_2 & \cdots &  B_l 
 \end{pmatrix}.
 \end{split}
 \end{equation}
 
\begin{remark} 
\label{RemarkAR}
The system model \eqref{ARM} belongs to the class of  AR models, where $l$ denotes the system lag (observability index of a minimal realization of the system).
Furthermore, at time $t$ the disturbance $w(t)$
affects $y(t)$ directly by addition of $B_ww(t)$ (observe $I$
in $A(\xi)$), with no additional dynamics associated with $w$.
 The corresponding state space realization given by \eqref{eq:3:11} is characterized by structured state-space matrices \eqref{ExtendedStateRealization}. Exploiting this structure plays an important part in devising data-driven controller synthesis solutions.~$\hfill{\Box}$  
\end{remark} 
 
In connection to the system \eqref{ARM}, we further define the controlled output $z(t) \in \mathbb{R}^{p_z}$ as follows 
\begin{equation}
\label{eq:3:11-C}
 z(t)=C_z\chi (t)+D_zu(t)+\tilde{D}w(t), 
 \end{equation}
where $C_z \in \mathbb{R}^{p_z\times (p+m)l}$, $D_z \in \mathbb{R}^{p_z\times m}$ and $\tilde{D} \in \mathbb{R}^{p_z\times m_w}$. The complete open-loop system is therefore given by \eqref{ARM}, \eqref{ChiDefinition}, \eqref{eq:3:11-C}, in which the channel $u \rightarrow y$ is used for control, while the channel $w \rightarrow z$ is the \emph{performance channel}, on which we impose the desired closed loop specifications in terms of dissipativity or $H_2$ performance.

We consider a dynamic controller of the following form
\begin{equation}
\label{ControllerARM}
C(q^{-1})u(t)=D(q^{-1})y(t), 
\end{equation}
where, as before, $q^{-1}$ represents the delay operator, while the
matrices $C(\xi)\in\mathbb{R}^{m \times m} [\xi]$ and $D(\xi)\in\mathbb{R}^{m \times p} [\xi]$ are polynomial matrices
\begin{equation}
\begin{split}
C(\xi)&:=I+C_1\xi+C_2\xi^2+\cdots+C_l\xi^l, \\
D(\xi)&:=D_1\xi+D_2\xi^2+\cdots+D_l\xi^l. \nonumber
\end{split}
\end{equation}
The controller \eqref{ControllerARM} can be described using the following space-state form 
\begin{equation}
\begin{split}
\chi_c(t+1)&=A_c
 \chi_c(t) +
B_c
y(t),
\\
u(t) &= 
\begin{pmatrix}
 \bar{C} & \bar{D} 
 \end{pmatrix}
 \chi_c(t), \nonumber
\end{split}
\end{equation}
where 
\begin{equation}
\chi_c(t):=\text{col} (u(t-1),..., u(t-l), y(t-1),..., y(t-l)) \nonumber
\end{equation}
is the state vector, and
 \begin{equation}
 \begin{split}
 A_c&:=\begin{pmatrix}
\bar{C} & \bar{D} \\
\begin{pmatrix} I_{m(l-1)} & 0 \end{pmatrix} & 0 \\
0 & 0 \\
 0 & \begin{pmatrix} I_{p(l-1)} & 0 \end{pmatrix}
 \end{pmatrix}, \quad
 B_c:=
 \begin{pmatrix}
0 \\0 \\ I_p \\0
\end{pmatrix}, \\
 \bar{C}&:=
 \begin{pmatrix}
 -C_1 & -C_2 & \cdots & -C_l 
 \end{pmatrix}, \\
  \bar{D}&:=
 \begin{pmatrix}
 D_1 & D_2 & \cdots &  D_l 
 \end{pmatrix}. \nonumber
 \end{split}
 \end{equation}
Furthermore, note  that controller \eqref{ControllerARM} can be rewritten as 
\begin{equation}
\label{Controller}
u(t)= 
\begin{pmatrix}
 \bar{C} & \bar{D} 
 \end{pmatrix}
 \chi_c(t)=
 K\chi(t) ,
 \end{equation}
where $K=\begin{pmatrix}
 \bar{D} & \bar{C} 
 \end{pmatrix}$ and $\chi(t)$ is the one defined in \eqref{ChiDefinition}. Finally, by connecting the system \eqref{eq:3:11} in a feedback loop (on the control channel from $y$ to $u$) with the controller \eqref{Controller}, we obtain the closed-loop system 
\begin{equation}
\label{ClosedLoop}
\begin{split}
\chi(t+1)&=
\hat{A}\chi(t)+\hat{B}w(t), \\
z(t)&=\hat{C}\chi(t)+\tilde{D}w(t),
\end{split}
\end{equation}
where $\hat{A}:=A_z+B_zK$ and  $\hat{C}:=C_z+D_zK$.

\subsection{The unknown matrices and the known data}
We assume that we have no knowledge of  matrices $\bar{A}$, $\bar{B}$ and $B_0$, therefore, $A_z$ and $B_z$ are are not completely known, see \eqref{ExtendedStateRealization}. However, we assume that we know a finite time trajectory (of length $l+N $ time steps) of the control input $u$ and the corresponding output trajectory $y$. The corresponding disturbance input $w$ is not known. 
More precisely, the input-output data represented by matrices 
\begin{equation}
\label{RecordedData}
\begin{split}
Y & :=
\begin{pmatrix}
y(0) & y(1) & \cdots & y(N-1)
\end{pmatrix}), \\ 
X & :=
\begin{pmatrix}
\chi(0) & \chi(1) & \cdots & \chi(N-1)
\end{pmatrix}, \\
U & :=
\begin{pmatrix}
u(0) & u(1) & \cdots & u(N-1)
\end{pmatrix} \\
\end{split}
\end{equation}  
is known, while the disturbance data matrix 
\begin{equation}
\label{NoiseData}
W :=
\begin{pmatrix}
w(0) & w(1) & \cdots & w(N-1)
\end{pmatrix}
\end{equation}
is unknown.
Note that these data matrices satisfy  equation
\begin{equation}
\label{DataModel}
Y = \begin{pmatrix} \bar{A} & \bar{B} \end{pmatrix} X+B_0 U+B_wW.
\end{equation}

\subsection{Assumptions regarding the known data}
As presented in Section~\ref{IntroductionLabel}, all controller synthesis results rely on some constructed system realization for which the state data matrix has full row rank.
Along the same line, 
we introduce the following assumption.
\begin{assumption}
\label{Assumption-1}
$\chi(t)\in \text{im}(X)$ for all $t\in \mathbb{N}_{\geq 0}$ and any $u(t) \in \mathbb{R}^m$ and $w(t)\in \mathbb{R}^{m_w}$.~$\hfill{\Box}$
\end{assumption}
The above assumption is verifiable under the following conditions. Consider the following compact\footnote{The singular value matrix is a square matrix which contains only the non-zero singular values.} singular value decomposition 
\begin{equation}
\label{SVD}
X=X_{s}\Sigma_{\chi} X_{r}^\top, 
\end{equation}
and introduce the following abbreviation $X_d:=\Sigma_{\chi} X_{r}^\top$.
The matrices $X_{s} \in \mathbb{R}^{(p+m)l\times \tilde{n}}$ and $X_d \in \mathbb{R}^{\tilde{n}\times N} $ are a full column rank matrix and a full row rank matrix, respectively.
Furthermore, note that from \eqref{eq:3:11-A} it follows that 
 Assumption~\ref{Assumption-1} holds if and only if $\text{im}(\begin{pmatrix} A_z X_{s} & B_z &  \hat{B} \end{pmatrix} )\subseteq \text{im}(X_{s})$.
Therefore, we can define conditions under which Assumption~\ref{Assumption-1} holds using the persistency of excitation 
\cite[Corollary 2 (iii)]{c1} and  output representation \cite[Expression (4)]{R33}:
\begin{itemize}
\item[\emph{i})]
in the noiseless case, if the input $u$ is persistently exciting of order $n+l$, then Assumption~\ref{Assumption-1} holds if and only if $\text{im}(\hat{B}) \subseteq  \text{im}(X)$;
\item[\emph{ii})] in the noisy case, if the input $\text{col}(u,w)$ is persistently exciting  of order $n+l$, then Assumption~\ref{Assumption-1} holds. 
Although we do not control the input $w$,
the Assumption~\ref{Assumption-1} is achieved for almost any\footnote{The set of signals $w$ which would “cancel out” effects of $u$ on
the system in a way that results in reduction of $\text{im}(X)$ is of measure zero.} $w$ if the input $u$ is persistently exciting  of order $n+l$, and  the assumption is verified by checking whether $\text{im}(\hat{B}) \subseteq  \text{im}(X)$.
\end{itemize}
Note that the matrix $\hat{B}$, which appears in both \emph{i}) and \emph{ii}),  is assumed to be known. 
Thus, both \emph{i}) and \emph{ii}) can be verified after the data recording process. Additionally, the stated conditions can be used to design the data recording experiment. They also provide us with information when the data recording can be terminated.

Next, we make an assumption which will be instrumental for the construction of controller synthesis methods.
\begin{assumption}
\label{Assumption-2}
First $p$ rows of matrix $X$ are linearly independent.~$\hfill{\Box}$
\end{assumption}
The statement in the above assumption is necessarily satisfied when the output $y$ is controllable with respect to the input which is persistently exciting  of order $n+1$. This follows directly from output controllability matrix and the definition of order of persistency of excitation \cite{c1}.

\subsection{The disturbance model}
\label{The disturbance modelSubsection}
The disturbance affecting the data recording is assumed to be bounded. This is modeled using a quadratic matrix inequality imposed on $W$, as done in \cite{c6}. 
\begin{assumption}
\label{Assumption1}
The matrix $W$ from \eqref{NoiseData} satisfies the inequality  
\begin{equation}
\label{eq:2:6}
\begin{pmatrix}
I \\
W^\top\\
\end{pmatrix}^\top
\underbrace{
\begin{pmatrix}
\Phi_{11} & \Phi_{12}\\
\Phi_{12}^\top & \Phi_{22}
\end{pmatrix}}_\Phi
\begin{pmatrix}
I \\
W^\top\\
\end{pmatrix} \succeq 0,
\end{equation}
where $\Phi=\Phi^\top \in \mathbb{R}^{(p+N)\times(p+N)}$ is a known matrix with $\Phi_{22} \prec 0$. ~$\hfill{\Box}$   
\end{assumption}
Note that $\Phi_{22} \prec 0$ ensures that the set of matrices $W$ which satisfy \eqref{eq:2:6} is bounded.

\subsection{The data-driven realization of the closed-loop system}
\label{SynthesisConstructionTools}

Let $X_{s\perp}$  be a semi-orthogonal matrix whose columns form a basis of $\text{ker}(X_{s}^\top)$ and let
\begin{equation}
\nonumber
\chi(t)=
\begin{pmatrix}
X_{s} & X_{s\perp}
\end{pmatrix} 
\begin{pmatrix}
\chi_{s}(t) \\
\chi_{s\perp}(t)
\end{pmatrix}.
\end{equation}
If Assumption~\ref{Assumption-1} holds, then the closed-loop system \eqref{ClosedLoop} can be described using the following realization
\begin{equation}
\label{ReduciraniModel}
\begin{split}
\chi_s(t+1)&=
\tilde{A}\chi_s(t)+\tilde{B}w(t), \\
z(t)&=\tilde{C}\chi_s(t)+\tilde{D}w(t), \\
\end{split}
\end{equation}
where $\tilde{A}:=X_{s}^\top \hat{A} X_{s}$, $\tilde{B}:=X_{s}^\top \hat{B}$ and $\tilde{C}:=\hat{C}X_{s}$. 
Note that $\tilde{A} \in \mathbb{R}^{\tilde{n}\times \tilde{n}}$, where $\tilde{n}$ is the dimension of
the $\text{im}(X)$, see \eqref{SVD}, and that the system realization \eqref{ReduciraniModel} is
valid due to the invariance of the state subspace based on the Assumption~\ref{Assumption-1}.

\subsection{The set of plants consistent with data}
\label{SetOfSystemsSubsection}

Recall that the matrix $\begin{pmatrix} \bar{A} & \bar{B} \end{pmatrix}$ is an unknown matrix, thus, equation \eqref{DataModel} can be rewritten in the following manner
\begin{equation}
\label{DataModel-1}
Y = \begin{pmatrix} \bar{A}_s & \bar{B}_s \end{pmatrix} X_d+B_0U+B_wW,
\end{equation}
where matrix
\begin{equation}
\nonumber
\begin{pmatrix} \bar{A}_s & \bar{B}_s \end{pmatrix}:=\begin{pmatrix} \bar{A} & \bar{B} \end{pmatrix}X_{s}.
\end{equation}
Note that since $X_{s}$ has full column rank, matrix $\begin{pmatrix} \bar{A}_s & \bar{B}_s \end{pmatrix}$ can be arbitrary, i.e., it is not constrained
 (e.g., rank or structure constraints).
Let $\Sigma$ be the set of all triples ($\bar{A}_s$,$\bar{B}_s$,$B_0$) which can explain the data ($Y$, $X_d$, $U$), that is
\begin{equation}
\Sigma :=
\{(\bar{A}_s,\bar{B}_s,B_0) \,| \, \eqref{DataModel-1} \text{ holds for some }  W \text{satisfying } \eqref{eq:2:6} \}. \nonumber
\end{equation}
This set can be described using a quadratic matrix inequality which is suitable for application of matrix S-lemma.
\begin{lemma}
\label{Lemma3}
The set $\Sigma$ is the set of all triples ($\bar{A}_s$,$\bar{B}_s$,$B_0$) that satisfy the following inequality
\begin{equation}
\label{eq:2:3}
\begin{pmatrix}
I   \\ \hline
\bar{A}_s^\top  \\
\bar{B}_s^\top   \\
B_0^\top 
\end{pmatrix}^\top
\underbrace{
\begin{pmatrix}
H_{11}  & H_{12}  \\
H_{12}^\top & H_{22} 
 \end{pmatrix}}_{H}
\begin{pmatrix}
I   \\ \hline
\bar{A}_s^\top  \\
\bar{B}_s^\top   \\
B_0^\top 
\end{pmatrix} \succeq 0,
\end{equation}
where
\begin{equation}
\nonumber
H:=
\begin{pmatrix}
\star
\end{pmatrix}
\begin{pmatrix}
\Phi_{11}  & \Phi_{12}\\ 
\Phi_{12}^\top & \Phi_{22}
\end{pmatrix}
\begin{pmatrix}
B_w & Y  \\ \hline
0 &  -X_d \\
0 & -U
\end{pmatrix}^\top.
\end{equation}
\end{lemma}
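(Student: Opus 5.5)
The plan is a direct computation: the quadratic form in \eqref{eq:2:3} is exactly the data-consistency quadratic form \eqref{eq:2:6} after substituting $W$ from \eqref{DataModel-1}. Fix a triple $(\bar{A}_s,\bar{B}_s,B_0)$ and define the residual
\begin{equation}
\nonumber
R:=Y-\begin{pmatrix} \bar{A}_s & \bar{B}_s\end{pmatrix}X_d-B_0U .
\end{equation}
Multiplying out the outer factors of $H$ gives
\begin{equation}
\nonumber
\begin{pmatrix} I & \bar{A}_s & \bar{B}_s & B_0\end{pmatrix}
\begin{pmatrix}
B_w & Y \\
0 & -X_d \\
0 & -U
\end{pmatrix}
=\begin{pmatrix} B_w & R\end{pmatrix}.
\end{equation}
Hence the left-hand side of \eqref{eq:2:3} equals $\begin{pmatrix} B_w & R\end{pmatrix}\Phi\begin{pmatrix} B_w & R\end{pmatrix}^\top$. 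The lemma therefore reduces to showing that this matrix is positive semidefinite if and only if $R=B_wW$ for some $W$ satisfying \eqref{eq:2:6}. (I read the dimensions so that $\Phi$ acts on $\mathbb{R}^{m_w+N}$; when $m_w=p$ this matches the stated size.)

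\emph{Necessity} ($\Sigma\subseteq$ the solution set of \eqref{eq:2:3}). If the triple lies in $\Sigma$, then $R=B_wW$ with $W$ satisfying \eqref{eq:2:6}. Consequently $\begin{pmatrix} B_w & R\end{pmatrix}=B_w\begin{pmatrix} I & W\end{pmatrix}$, and
\begin{equation}
\nonumber
\begin{pmatrix} B_w & R\end{pmatrix}\Phi\begin{pmatrix} B_w & R\end{pmatrix}^\top
=B_w\begin{pmatrix} I\\ W^\top\end{pmatrix}^\top\Phi\begin{pmatrix} I\\ W^\top\end{pmatrix}B_w^\top .
\end{equation}
This is $\succeq 0$ by \eqref{eq:2:6}.

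\emph{Sufficiency.} Suppose $\begin{pmatrix} B_w & R\end{pmatrix}\Phi\begin{pmatrix} B_w & R\end{pmatrix}^\top\succeq 0$. The first step is to show $\text{im}(R)\subseteq\text{im}(B_w)$.
\begin{itemize}
\item Take any $v$ with $v^\top B_w=0$.
\item Evaluating the quadratic form at $v$ leaves only the $\Phi_{22}$ block: $v^\top R\,\Phi_{22}\,R^\top v\geq 0$.
\item Since $\Phi_{22}\prec 0$, this forces $R^\top v=0$.
\end{itemize}
Thus $R$ annihilates $\ker(B_w^\top)=\text{im}(B_w)^\perp$, so $\text{im}(R)\subseteq\text{im}(B_w)$, and there is a $W$ with $R=B_wW$. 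Because $B_w$ has full column rank, $W=B_w^\dagger R$ is unique, and \eqref{DataModel-1} holds with this $W$.

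The second step is to transfer the inequality to $W$. Substituting back, we obtain $B_w M B_w^\top\succeq 0$, where $M:=\begin{pmatrix} I\\ W^\top\end{pmatrix}^\top\Phi\begin{pmatrix} I\\ W^\top\end{pmatrix}$. Using full column rank of $B_w$ once more, any $x$ can be written as $x=B_w^\top y$ with $y=(B_w^\dagger)^\top x$. Hence $x^\top Mx=y^\top B_wMB_w^\top y\geq 0$, so $M\succeq 0$, i.e.\ $W$ satisfies \eqref{eq:2:6}. This places the triple in $\Sigma$.

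The main point of care is the step $\text{im}(R)\subseteq\text{im}(B_w)$, which is where $\Phi_{22}\prec 0$ is genuinely used. When $B_w$ is square (the case $m_w=p$), this step is vacuous: $B_w$ is invertible, and the whole argument is a congruence with $B_w^{-1}$.
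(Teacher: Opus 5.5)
Your proof is correct and follows the same route as the paper, which likewise multiplies \eqref{eq:2:6} by $B_w$ and $B_w^\top$, uses the full column rank of $B_w$ to keep the inequality equivalent, and substitutes $B_wW$ via \eqref{DataModel-1}. Your extra argument that the residual lies in $\text{im}(B_w)$ because $\Phi_{22}\prec 0$ is a step the paper's two-line proof omits but genuinely needs for the reverse inclusion when $m_w<p$, and your reading of $\Phi$ as $(m_w+N)\times(m_w+N)$ is the dimensionally consistent one.
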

\begin{proof}
If we multiply \eqref{eq:2:6} with matrices $B_w$ and $B_w^\top$ from the left and right side, respectively, then we obtain an equivalent matrix inequality since matrix $B_w$ has a full column rank. Next, if we substitute  $B_wW$  using \eqref{DataModel-1} we obtain \eqref{eq:2:3}.
\end{proof}

In order to finally satisfy all the assumptions of the matrix S-lemma, 
 we make the following assumption.
 \begin{assumption}
\label{Assumption-3}
The set $\Sigma$ is a nonempty set and we have that $\text{rank}(\text{col}(X_d,U))=\tilde{n}+m$.
~$\hfill{\Box}$
\end{assumption}
Recall that $\text{rank}(X_s)=\tilde{n}$, see \eqref{SVD}.
Thus, the rank condition from  Assumption~\ref{Assumption-3} can be achieved/verified in the same manner as done in connection to Assumption~\ref{Assumption-1}, with the difference that we need $n+l+1$ as the order of persistency of excitation instead of $n+l$.
Note that if Assumption~\ref{Assumption-3} holds, then $H_{22}\prec 0$ in \eqref{eq:2:3}, that is, the set $\Sigma$ is bounded \cite{c21}.

\subsection{The control problem}
\label{TheControlProblem}
In this paper, we are concerned with the following control problem. 
\begin{problem}
\label{Problem1}
Consider the system \eqref{ARM}, the controlled output \eqref{eq:3:11-C}, the controller \eqref{ControllerARM}, and suppose that we have the following knowledge: the recorded input-output data ($Y$, $X$, $U$) as defined in \eqref{RecordedData}; the disturbance bound  specified by $\Phi$ in Assumption~\ref{Assumption1}.
Let Assumptions~\ref{Assumption-1}, \ref{Assumption-2} and \ref{Assumption-3} hold.  
Recall that the considered system and controller have representations \eqref{eq:3:11} and \eqref{Controller}, respectively, and that they form a closed-loop system which can be described using system realization \eqref{ReduciraniModel}.
Consider two separate synthesis goals:
\begin{enumerate}
 \item[\emph{a})]
For a given performance supply function  \eqref{supplyFunction} with  regular matrix \begin{small}$ \begin{pmatrix} Q & S \\ S^\top & R \end{pmatrix} $\end{small} and  $R \succeq 0$,
design an  output-feedback controller \eqref{Controller} such that the closed-loop system \eqref{ReduciraniModel} is stable and strictly dissipative with respect to a given supply function for all triples $(\bar{A}_s, \bar{B}_s, B_0) \in \Sigma$.
 \item[\emph{b})] 
For a given $H_2$ performance $\mu$, 
design a output-feedback controller \eqref{Controller} such that the performance channel $w \rightarrow z$ of the closed-loop system \eqref{ReduciraniModel} achieves a given $H_2$ performance for all  triples $(\bar{A}_s, \bar{B}_s, B_0) \in \Sigma$.~$\hfill{\Box}$
\end{enumerate}   
\end{problem}

\begin{figure*}[b!]
 \par\noindent\rule{\textwidth}{0.5pt}
  \begin{equation}
  \tag{CS}
\label{ControllerSynthesis}
  \begin{gathered}
  \Pi_{11}:=
  \begin{pmatrix} 
-\alpha H_{22} & \begin{pmatrix} -\alpha H_{12}^\top & 0 \end{pmatrix}   \\
\star & L\tilde{P}L^\top+\begin{pmatrix} B_w\tilde{Q} B_w^\top-\alpha H_{11} & 0 \\ 0 & 0_{\tilde{n}-p} \end{pmatrix} & 
\end{pmatrix}, \quad
  \Pi_{12}:=
  \begin{pmatrix} 
 0  \\
\begin{pmatrix} B_w\tilde{Q} \tilde{D}^\top-B_w\tilde{S} \\ 0 \end{pmatrix} 
\end{pmatrix}, \\
  \Pi_{13}:=
\begin{pmatrix} 
 \begin{pmatrix} \tilde{P} \\ \tilde{K} \end{pmatrix} \\
F(J_{A_z}X_{s}\tilde{P}+J_{B_z}\tilde{K}) \\ 
\end{pmatrix}, \quad
\Pi_{22}:=
\tilde{D}\tilde{Q}\tilde{D}^\top -\text{He}(\tilde{D} \tilde{S})+\tilde{R}, 
\quad 
\Pi_{23}:=  C_zX_s\tilde{P}+D_z\tilde{K}.
\end{gathered}
\end{equation}
\end{figure*}

\section{Controller synthesis}
\label{Controller synthesis Label}

The following definitions are instrumental for solving the Problem~\ref{Problem1}.
Consider a matrix partition $X_s=\text{col}(X_{s1}, X_{s2})$ where $X_{s1}\in \mathbb{R}^{p\times \tilde{n}}$. Then, if the Assumption~\ref{Assumption-2} holds, the equation
\begin{equation}
\label{ipakTreba1}
LX_{s}^\top=
\begin{pmatrix}
\begin{pmatrix} I_p \\ 0 \end{pmatrix} & F
\end{pmatrix}
\end{equation}
has a solution 
\begin{equation}
\label{Definitions2}
L:=
\begin{pmatrix}
L_1 \\ L_2
\end{pmatrix}, \quad F:= \begin{pmatrix} F_{12} \\ F_{22} \end{pmatrix}=\begin{pmatrix} L_1 X_{s2}^\top \\ L_2 X_{s2}^\top  \end{pmatrix},
\end{equation}
where $L_1=(X_{s1} X_{s1}^\top)^{-1} X_{s1}$ and $L_2^\top$ is an arbitrary full column matrix whose columns form a basis of $\text{ker}(X_{s1})$. 
The following theorems present a solution to the Problem~\ref{Problem1}.
Note that statements in these theorems are in accordance with  dissipativity and $H_2$ performance description presented in Sections~\ref{SecDissipativity} and \ref{H2performanceSubsection}, respectively.

\begin{theorem}
\label{Theorem2}
Consider the Problem~\ref{Problem1}a and definitions of matrices in expression \eqref{ControllerSynthesis} presented 
at the bottom of the page. Then, the following two statements are equivalent:
\begin{enumerate}
 \item[\emph{i})] Matrix inequalities
 \begin{subequations}
 \label{eq:3:4}
 \begin{align}
 & P\succ 0, \quad R\succeq 0, \\
 & \begin{pmatrix}
\star
\end{pmatrix}^\top
\underbrace{
\begin{pmatrix}
-P & 0 & 0 & 0  \\
0& P & 0 & 0   \\
0 & 0 & Q & S \\
0 & 0 & S^\top & R\\
\end{pmatrix}}_{\Delta }
\underbrace{\begin{pmatrix}
I & 0 \\
\tilde{A} & \tilde{B} \\
0 & I\\
\tilde{C} & \tilde{D}  \\
\end{pmatrix}}_U
\prec 0,
\end{align}
 \end{subequations}
hold for all $(\bar{A}_s,\bar{B}_s,B_0)\in \Sigma$.
\item[\emph{ii})]
Matrix inequality 
\begin{equation}
\label{LMI_Theorem2}
\begin{pmatrix}
\Pi_{11} & \Pi_{12} & \Pi_{13} \\
\Pi_{12}^\top & \Pi_{22} & \Pi_{23} \\
\Pi_{13}^\top & \Pi_{23}^\top & \tilde{P}
\end{pmatrix} \succ 0,
\end{equation}
hold, where $\tilde{Q}\succeq 0$, $L$ and $F$ are defined in \eqref{Definitions2} and
\begin{equation}
\begin{gathered}
\label{InverseDefinitions}
\tilde{P}:=P^{-1}, \,
\begin{pmatrix}
\tilde{Q} & \tilde{S} \\
\tilde{S}^\top & \tilde{R}
\end{pmatrix}:=
\begin{pmatrix}
Q & S\\
S^\top & R
\end{pmatrix}^{-1}, \\
 \tilde{K}:=KX_s\tilde{P}.
\end{gathered}
\end{equation}
\end{enumerate}
\end{theorem}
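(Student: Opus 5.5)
The plan is the standard three-step chain: dualize \eqref{eq:3:4}, rewrite the dual inequality as a quadratic inequality in the unknown triple, and apply Theorem~\ref{Theorem1} with the set description of Lemma~\ref{Lemma3}.

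\textbf{Step 1: dualization.} Since $\Delta$ is nonsingular ($P\succ0$ and the supply matrix is regular), we apply Lemma~\ref{Lemma2} with $\mathcal{U}=\text{im}(U)$ and $\mathcal{V}=\text{im}\,\text{col}(0,I,0,0)$ (inflated appropriately). The inequality $x^\top\Delta x\geq0$ on $\mathcal{V}$ amounts to $P\succ0$ on the $P$-block together with $R\succeq0$. This converts \eqref{eq:3:4} into the dual inequality
\begin{equation}
\nonumber
\begin{pmatrix} -\tilde{A}^\top & -\tilde{C}^\top \\ I & 0 \\ -\tilde{B}^\top & -\tilde{D}^\top \\ 0 & I \end{pmatrix}^\top \Delta^{-1} (\star) \succ 0,
\end{equation}
where $\Delta^{-1}=\text{diag}(-\tilde{P},\tilde{P},\text{inverse supply})$. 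The companion condition on $\mathcal{V}^\perp$ yields $\tilde{Q}\succeq0$. Expanded, the dual inequality reads
\begin{equation}
\nonumber
\begin{pmatrix} \tilde{P}-\tilde{A}\tilde{P}\tilde{A}^\top+\tilde{B}\tilde{Q}\tilde{B}^\top & \tilde{B}\tilde{Q}\tilde{D}^\top-\tilde{B}\tilde{S}-\tilde{A}\tilde{P}\tilde{C}^\top \\ \star & \Pi_{22}-\tilde{C}\tilde{P}\tilde{C}^\top \end{pmatrix}\succ0.
\end{equation}

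\textbf{Step 2: expose the unknowns linearly.} We write $\tilde{A}\tilde{P}=X_s^\top(A_zX_s\tilde{P}+B_z\tilde{K})$, using $\tilde{K}=KX_s\tilde{P}$, and $\tilde{B}=X_s^\top\hat{B}$. We then apply a congruence with $L$. This is legitimate because $L$ is square and invertible: $L_1$ and $L_2$ have complementary row spaces when $X_{s1}$ has full row rank (Assumption~\ref{Assumption-2}). By \eqref{ipakTreba1}, $LX_s^\top$ splits the structured matrices, since $A_z$ has unknown first block row $(\bar{A}\ \bar{B})$ and known part $J_{A_z}$. This gives
\begin{equation}
\nonumber
LX_s^\top(A_zX_s\tilde{P}+B_z\tilde{K})=\begin{pmatrix}I\\0\end{pmatrix}\begin{pmatrix}\bar{A}_s & \bar{B}_s & B_0\end{pmatrix}\begin{pmatrix}\tilde{P}\\ \tilde{K}\end{pmatrix}+F(J_{A_z}X_s\tilde{P}+J_{B_z}\tilde{K}),
\end{equation}
and similarly $LX_s^\top\hat{B}=\text{col}(B_w,0)$. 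Next, a Schur complement with respect to $\tilde{P}$ moves the terms $\tilde{A}\tilde{P}\tilde{A}^\top$ and $\tilde{C}\tilde{P}\tilde{C}^\top$ into a block row and column $\text{col}(LX_s^\top\tilde{A}\tilde{P},\tilde{C}\tilde{P})$. This yields a block matrix that is affine in $Z:=\text{col}(\bar{A}_s^\top,\bar{B}_s^\top,B_0^\top)$. A further Schur-type rearrangement, or equivalently writing it as $\begin{pmatrix}I\\ Z\end{pmatrix}^\top M\begin{pmatrix}I\\ Z\end{pmatrix}\succ0$ with the identity acting on the first $p$ outputs, puts it in the form required by Theorem~\ref{Theorem1}. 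Note that $Z$ enters only through the first $p$ rows, multiplied by $\text{col}(\tilde{P},\tilde{K})$.

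\textbf{Step 3: S-lemma.} Assumption~\ref{Assumption-3} gives $H_{22}\prec0$ and $\Sigma=S_H\neq\emptyset$, so Theorem~\ref{Theorem1} makes robustness over $\Sigma$ equivalent to the existence of $\alpha\geq0$ with $M-\alpha H\succ0$, where $H$ is padded with zeros to the full dimension. One more Schur complement then eliminates the quadratic term in $\text{col}(\tilde{P},\tilde{K})$, placing $-\alpha H_{22}$ in the first block and $\Pi_{13}$ as the coupling, which produces \eqref{LMI_Theorem2}. Each step is an equivalence, so both directions follow.

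\textbf{Main obstacle.} The hard part is Step 2's bookkeeping: $Z$ must appear as $\begin{pmatrix}I\\ Z\end{pmatrix}$ sandwiching a fixed $M$, not bilinearly coupled to the decision variables. Handling this requires first Schur-complementing the $\tilde{A}\tilde{P}\tilde{A}^\top$ term so that $Z$ appears linearly, and then dualizing again in the Schur sense to reach the quadratic form in $Z$. I would try the variant where the $I$-block has size $p$ matched to $H_{11}$ and the remaining $\tilde{n}-p$ rows (from $L_2$) carry no uncertainty, which explains the $0_{\tilde{n}-p}$ padding in $\Pi_{11}$. I would also check carefully the sign conventions needed to obtain $-\alpha H_{12}^\top$ in the off-diagonal block.
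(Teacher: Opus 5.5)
Your outline follows the paper's proof: dualization via Lemma~\ref{Lemma2}, the substitution $\tilde A\tilde P=X_s^\top(A_zX_s\tilde P+B_z\tilde K)$, congruences with $\text{diag}(I,\tilde P)$ and $\text{diag}(L^\top,I)$, a Schur complement, the matrix S-lemma, a Schur complement back, and a block permutation.

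The step you leave as a ``further Schur-type rearrangement'' involves no second dualization. Concretely, it is the Schur complement with respect to everything except the first $p$ rows and columns:
\begin{itemize}
\item That trailing block $\Theta_{22}$ and the corner $\Theta_{11}$ contain none of the unknowns. This is because $L\tilde B=\text{col}(B_w,0)$ and the unknowns enter only the first $p$ rows of $L\tilde A\tilde P$.
\item The coupling factors as $\Theta_{12}=\Gamma\tilde\Gamma$ with $\Gamma=(I_p\ \ \bar A_s\ \ \bar B_s\ \ B_0)$.
\item The $p\times p$ Schur complement is then $\Gamma M\Gamma^\top$ with $M=\text{diag}(\Theta_{11},0)-\tilde\Gamma\Theta_{22}^{-1}\tilde\Gamma^\top$. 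This $M$ already has the same size $p+\tilde n+m$ as $H$, so zero padding of $\alpha H$ only appears when you Schur back.
\item The side condition $\Theta_{22}\succ0$ is free of the unknowns. It is extracted from (i) using $\Sigma\neq\emptyset$.
\end{itemize}

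One actual slip is in Step 1. On $\mathcal{V}^\perp$ (the first and third blocks), $\Delta^{-1}$ restricts to $\text{diag}(-\tilde P,\tilde Q)$, which the dualization lemma requires to be $\preceq 0$. So the dualization gives $\tilde Q\preceq 0$, as in \eqref{eq:3:2A1}, not $\tilde Q\succeq 0$. The $\succeq$ in the theorem statement is a typo you inherited; for example, the $H_\infty$ supply gives $\tilde Q=-\gamma^{-2}I$.
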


\emph{Proof of Theorem~\ref{Theorem2}:}
Suppose the statement $\emph{i})$ is true.
Note the definition of matrices $\Delta$ and $U$ in \eqref{eq:3:4}, and consider the definitions in \eqref{InverseDefinitions}.
Let $\mathcal{U}=\text{im}(U)$, $\mathcal{V}=\text{im}(V)$, where \begin{small}$V:=\begin{pmatrix} 0 & I & 0 & 0 \\ 0 & 0 & 0 & I \end{pmatrix}^\top$\end{small}
such that $V^\top \Delta V = \text{diag}(P,R)$. Then,  according to Lemma~\ref{Lemma2}, \eqref{eq:3:4} is equivalent to
\begin{subequations}
\label{eq:3:2}
\begin{align}
& \tilde{P} \succ 0,  \label{eq:3:2A}\\
& \tilde{Q} \preceq 0,  \label{eq:3:2A1} \\
& \begin{pmatrix}
\star
\end{pmatrix}^\top
\underbrace{
\begin{pmatrix}
-\tilde{P} & 0 & 0 & 0  \\
0& \tilde{P} & 0 & 0   \\
0 & 0 & \tilde{Q} & \tilde{S} \\
0 & 0 &\tilde{S}^\top & \tilde{R}\\
\end{pmatrix}}_{\Delta^{-1} }
\begin{pmatrix}
\tilde{A}^\top & \tilde{C}^\top \\
-I & 0 \\
\tilde{B}^\top & \tilde{D}^\top \\
0 & -I  \\
\end{pmatrix}
\succ 0 \label{eq:3:2B}.
\end{align}
\end{subequations}
We can apply Schur complement rule on \eqref{eq:3:2A} and \eqref{eq:3:2B} with respect to $\tilde{P}$, to obtain the following inequality, which is equivalent to \eqref{eq:3:2}:
\begin{subequations}
\label{Novo}
\begin{align}
&\tilde{Q} \preceq 0, \label{NovoA} \\
&
\begin{pmatrix}
\tilde{P}+\tilde{B}\tilde{Q}\tilde{B}^\top & \tilde{B}\tilde{Q}\tilde{D}^\top-\tilde{B}\tilde{S} & \tilde{A} \\
\tilde{D}\tilde{Q}\tilde{B}^\top-\tilde{S}^\top\tilde{B}^\top & \tilde{D}\tilde{Q}\tilde{D}^\top-\text{He}\{ \tilde{D}\tilde{S} \}+\tilde{R} & \tilde{C} \\
\tilde{A}^\top & \tilde{C}^\top  & P
\end{pmatrix} \succ 0. \label{NovoB}
\end{align}
\end{subequations}
By applying the congruence transformation on \eqref{NovoB} with respect to the matrix
 $S_0 := \text{diag}(I_{\tilde{n}+p_z},\tilde{P})$
we obtain an equivalent expression
\begin{equation}
\label{eq:3:9}
\underbrace{
\begin{pmatrix}
\tilde{P}+\tilde{B}\tilde{Q}\tilde{B}^\top & \tilde{B}\tilde{Q}\tilde{D}^\top-\tilde{B}\tilde{S} & \tilde{A}\tilde{P} \\
\tilde{D}\tilde{Q}\tilde{B}^\top-\tilde{S}^\top\tilde{B}^\top & \tilde{D}\tilde{Q}\tilde{D}^\top-\text{He}\{ \tilde{D}\tilde{S} \}+\tilde{R} & \tilde{C}\tilde{P} \\
\tilde{P}\tilde{A}^\top & \tilde{P}\tilde{C}^\top  & \tilde{P}
\end{pmatrix}}_{\bar{\Theta}} \succ 0.
\end{equation}
Next, we apply the congruence transformation on \eqref{eq:3:9} with respect
to matrix $S_1 := \text{diag}(L^\top,I_{p_z+\tilde{n}})$ in order to obtain the
equivalent inequality $\Theta:=S_1^\top \bar{\Theta}S_1 \succ 0$.
Let matrix $\Theta$ be partioned in the following manner
\begin{equation}
\nonumber
\Theta=:\begin{pmatrix}
\Theta_{11} & \Theta_{12} \\
\Theta_{12}^\top & \Theta_{22}
\end{pmatrix},
\end{equation}
where $\Theta_{11}\in \mathbb{R}^{p \times p}$. Then, by using Schur complement rule on $\Theta$ with respect to $\Theta_{22}$, we get an equivalent expression
\begin{subequations} 
\label{eq:3:3}
\begin{align}
& \label{eq:3:3A}
\Gamma
\underbrace{(\begin{pmatrix} \Theta_{11} & 0 \\ 0 & 0 \end{pmatrix}- \tilde{\Gamma} \Theta_{22}^{-1} \tilde{\Gamma}^\top  )}_{M}
\Gamma^\top
\succ 0,  \\ 
 & \label{eq:3:3B} \Theta_{22} \succ 0 ,
\end{align}
\end{subequations}
where $\Theta_{12}=\Gamma \tilde{\Gamma}$  with $\tilde{K}:=KX_{s}\tilde{P}$ and
\begin{equation}
\nonumber
\Gamma:=
\begin{pmatrix}
I_p & \begin{pmatrix} \bar{A}_s & \bar{B}_s \end{pmatrix} & B_0
\end{pmatrix}, \quad 
\tilde{\Gamma}:=
\begin{pmatrix}
\tilde{\Gamma}_1 & \tilde{\Gamma}_2
\end{pmatrix},
\end{equation}
\begin{equation}
\nonumber
\tilde{\Gamma}_1:=
\begin{pmatrix}
\begin{pmatrix} I_p & 0 \end{pmatrix} L \tilde{P} L^\top  \begin{pmatrix} 0 \\ I_{\tilde{n}-p}  \end{pmatrix} 
&  B_w \tilde{Q} \tilde{D}^\top -B_w \tilde{S} \\
0  & 0 \\
0 & 0
\end{pmatrix},
\end{equation}
\begin{equation}
\nonumber
\tilde{\Gamma}_2:=
\begin{pmatrix}
F_{12}J_{A_z}X_{s}\tilde{P}+F_{12}J_{B_z}\tilde{K} \\
\tilde{P}   \\
 \tilde{K}
\end{pmatrix}.
\end{equation}
Note that $\tilde{K} \in \mathbb{R}^{m\times \tilde{n}}$ is an arbitrary matrix since matrices $\tilde{P}$ and $X_s$ are full rank matrix and full column rank matrix, respectively.
Matrix inequalities \eqref{eq:3:3A} and \eqref{eq:2:3} are in a form suitable for the application of Theorem~\ref{Theorem1} since Assumption~\ref{Assumption-3} holds.
Therefore, we conclude that we can use Theorem~\ref{Theorem1} to state that \eqref{eq:3:3A} holds for all triples $(\bar{A}_s,\bar{B}_s,B_0)$ which satisfy the inequality  (\ref{eq:2:3}), if and only if there exist $\alpha \geq 0$ such that
\begin{equation}
\label{s-lemmaApplied}
\begin{pmatrix} \Theta_{11} & 0 \\ 0 & 0 \end{pmatrix}- \tilde{\Gamma} \Theta_{22}^{-1} \tilde{\Gamma}^\top  -\alpha H \succ 0.
\end{equation}
Since \eqref{eq:3:3B} and \eqref{s-lemmaApplied} hold,  we can apply the Schur complement rule on \eqref{s-lemmaApplied} with respect to $\Theta_{22}$ to obtain the following equation
\begin{equation}
\nonumber
\begin{pmatrix}
\begin{pmatrix} \Theta_{11} & 0 \\ 0 & 0 \end{pmatrix}-\alpha H & \tilde{\Gamma} \\
\tilde{\Gamma}^\top & \Theta_{22}
\end{pmatrix}\succ 0.
\end{equation}
Finally, if we apply congruence transformation with respect to matrix $S_2:=\text{diag}(\begin{pmatrix} 0 & I_p \\ I_{\tilde{n}+m} & 0 \end{pmatrix}, I)$ we obtain \eqref{LMI_Theorem2}.
~$\hfill{\blacksquare}$
\begin{theorem}
\label{Theorem3}
Consider the Problem~\ref{Problem1}b and definitions of matrices in expression \eqref{ControllerSynthesis}. Then, the following two statements are equivalent:
\begin{enumerate}
 \item[\emph{i})] Matrix inequalities
\begin{equation}
\label{H2LMIsynth}
\begin{pmatrix}
\tilde{P}-\tilde{A} \tilde{P} \tilde{A}^\top & \tilde{B} \\
\tilde{B}^\top &  I
\end{pmatrix}
 \succ 0, 
 \quad 
 \begin{pmatrix}
Z-\tilde{D}\tilde{D}^\top  & \tilde{C} \tilde{P} \\
\tilde{P}  \tilde{C}^\top &  \tilde{P}
\end{pmatrix} \succ 0,
\end{equation}
with $\text{tr}(Z)< \mu^2$ hold for all $(\bar{A}_s,\bar{B}_s,B_0)\in \Sigma$.
\item[\emph{ii})]
Matrix inequalities 
 \begin{subequations}
 \label{eq:33:4}
 \begin{align}
\label{LMI_Theorem3a}
&
\begin{pmatrix}
\Pi_{11}  & \Pi_{13} \\
\Pi_{13}^\top  & \tilde{P}
\end{pmatrix} \succ 0,
\\ &
\label{LMI_Theorem3b}
 \begin{pmatrix}
Z-\tilde{D}\tilde{D}^\top & C_zX_s\tilde{P}+D_z\tilde{K} \\
\star &  \tilde{P}
\end{pmatrix} \succ 0,
 \end{align}
\end{subequations}
with $\text{tr}(Z)< \mu^2$  hold, where $L$ and $F$ are defined in \eqref{Definitions2}, $\tilde{P}:=P^{-1}$ and $\tilde{K}:=KX_s\tilde{P}$.
\end{enumerate}
\end{theorem}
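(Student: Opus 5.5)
The plan is to follow the proof of Theorem~\ref{Theorem2}, after first separating the two inequalities in \eqref{H2LMIsynth}. The second inequality in \eqref{H2LMIsynth} involves only $\tilde{C}\tilde{P}=(C_z+D_zK)X_s\tilde{P}=C_zX_s\tilde{P}+D_z\tilde{K}$, together with $\tilde{D}$, $Z$ and $\tilde{P}$. None of these depends on the unknown triple $(\bar{A}_s,\bar{B}_s,B_0)$. Hence "for all triples in $\Sigma$" is vacuous for it: it coincides with \eqref{LMI_Theorem3b}, with the same constraint $\text{tr}(Z)<\mu^2$. As in Theorem~\ref{Theorem2}, $\tilde{K}$ ranges over all of $\mathbb{R}^{m\times\tilde{n}}$ because $X_s$ has full column rank and $\tilde{P}$ is invertible, so the change of variables loses nothing. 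All the work is therefore in the first inequality.

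For the first inequality, I would take the Schur complement with respect to the block $I$, which gives $\tilde{P}-\tilde{B}\tilde{B}^\top-\tilde{A}\tilde{P}\tilde{A}^\top\succ 0$. Together with $\tilde{P}\succ 0$ (implied by the second inequality), a Schur complement with respect to $\tilde{P}$ gives the equivalent condition
\begin{equation}
\nonumber
\begin{pmatrix} \tilde{P}-\tilde{B}\tilde{B}^\top & \tilde{A}\tilde{P} \\ \tilde{P}\tilde{A}^\top & \tilde{P}\end{pmatrix}\succ 0 .
\end{equation}
This is exactly the matrix $\bar{\Theta}$ in \eqref{eq:3:9} with the $p_z$-row/column block removed and with $\tilde{Q}=-I$. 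So $\Pi_{11}$ in \eqref{LMI_Theorem3a} is to be read with $B_w\tilde{Q}B_w^\top=-B_wB_w^\top$, and I would state this identification explicitly.

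Next I would apply the congruence with $\text{diag}(L^\top,I_{\tilde{n}})$, using \eqref{ipakTreba1}. The key computations are the following. First, $LX_s^\top\hat{B}=\text{col}(B_w,0)$, so the $B_w$ part lands only in the leading $p\times p$ block. Second, $LX_s^\top(A_z+B_zK)X_s\tilde{P}$ equals $\begin{pmatrix} I_p\\0\end{pmatrix}\bigl(\begin{pmatrix}\bar{A}_s & \bar{B}_s\end{pmatrix}\tilde{P}+B_0\tilde{K}\bigr)+F(J_{A_z}X_s\tilde{P}+J_{B_z}\tilde{K})$. In both, the unknowns appear only in the first $p$ rows and only through the factor $\Gamma=\begin{pmatrix} I_p & \begin{pmatrix}\bar{A}_s&\bar{B}_s\end{pmatrix} & B_0\end{pmatrix}$. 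Partitioning as before and taking the Schur complement with respect to the lower block $\Theta_{22}$ (which is known and positive definite), I would write the result as $\Gamma M\Gamma^\top\succ 0$, with $M$ independent of the unknowns. By Assumption~\ref{Assumption-3} we have $H_{22}\prec 0$ and $\Sigma\neq\emptyset$, so Theorem~\ref{Theorem1} together with Lemma~\ref{Lemma3} shows that $\Gamma M\Gamma^\top\succ 0$ holds for all triples in $\Sigma$ if and only if $M-\alpha H\succ 0$ for some $\alpha\ge 0$. Undoing the Schur complement and applying the permutation congruence $S_2$ then yields \eqref{LMI_Theorem3a}. Every step is an equivalence, so both directions follow.

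The step I expect to be the main obstacle is verifying the factorization $\Theta_{12}=\Gamma\tilde{\Gamma}$ (and the analogous factorization of $\Theta_{11}$) in this reduced setting without the $z$-channel. In particular, the columns of $\tilde{\Gamma}_1$ associated with $L\tilde{P}L^\top$ must be handled so that the term $L\tilde{P}L^\top$ in $\Pi_{11}$ is reproduced exactly. I would check this blockwise using $L_1X_{s1}^\top=I_p$ and $L_2X_{s1}^\top=0$.
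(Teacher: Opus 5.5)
Your proposal is correct and follows essentially the same route as the paper. The paper also notes that the second inequality in \eqref{H2LMIsynth} does not involve the unknown triple and coincides with \eqref{LMI_Theorem3b}, reduces the first inequality via Schur complements to the truncated form of \eqref{eq:3:9} with $\tilde{Q}=-I$, and then reuses the proof of Theorem~\ref{Theorem2} (congruence with $L^\top$, Schur complement with respect to $\Theta_{22}$, matrix S-lemma, reverse Schur complement, permutation $S_2$); your blockwise check via $L_1X_{s1}^\top=I_p$ and $L_2X_{s1}^\top=0$ is exactly what makes $\Theta_{12}=\Gamma\tilde{\Gamma}$ hold and reproduces the $L\tilde{P}L^\top$ term in $\Pi_{11}$.
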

\emph{Proof of Theorem~\ref{Theorem3}:} Suppose the statement $\emph{i})$ is true. If we apply Schur complement rule with respect to $\tilde{P}$ and $I$ on first matrix inequality in \eqref{H2LMIsynth}, and then apply congruence transformation with respect to matrix $S_3:=\text{diag}(I,\tilde{P})$ we can obtain equivalent matrix inequality
\begin{equation}
\nonumber
\begin{pmatrix}
\tilde{P}-\tilde{B}\tilde{B}^\top & \tilde{A}\tilde{P} \\
\tilde{P}\tilde{A}^\top  & \tilde{P}
\end{pmatrix}\succ 0.
\end{equation}
Note that this matrix inequality is equal to \eqref{eq:3:9} if we exclude block matrices from second row and column, with $\tilde{Q}=-I$. Thus, we can follow same the same argumentation as in the proof of Theorem~\ref{Theorem2} and excluding: matrix $I_{p_z}$ from definition of $S_1$; block matrices from second column of $\tilde{\Gamma}_1$. By following this procedure we obtain matrix inequality \eqref{LMI_Theorem3a}. Finally, the second inequality in \eqref{H2LMIsynth} is equivalent to \eqref{LMI_Theorem3b}, this finishes the proof.~$\hfill{\blacksquare}$

\begin{remark}
After solving synthesis LMIs numerically, the controller $K$ that solves Problem~\ref{Problem1} can be determined via matrix linear equation $K X_s=\tilde{K}\tilde{P}^{-1}$. 
Note that in \eqref{ARM} we can also use $l$ which is larger than the real system lag, i.e. we only need a upper bound of the lag, but this might introduce conservatism due to inclusion of unnecessary model parameters. 
\end{remark}

\section{Example}
\label{ExampleSection}


To illustrate the results we use an unstable discrete-time system from \cite{R22}, where it was used to illustrate the controller synthesis for stabilization.
To add the performance related channel, we extend the system with a performance input-output pair.
The matrices which define the AR model are 
\begin{equation}
\nonumber
\bar{A} =
\begin{pmatrix}
0 & 1 & 0  & 0 \\
0 & 1 & 1 & -1
\end{pmatrix},
\quad 
\bar{B} =
\begin{pmatrix}
2 & 0 & 0 & 0 \\
1 & 1 & -1 & -1
\end{pmatrix},
\end{equation}
and $B_0=0$.  These matrices are obtained, following the procedure from \cite[Expression (4b)]{R22}, from a minimal state-space realization
\begin{equation}
\nonumber
\begin{pmatrix}
x(t+1) \\ y(t)
\end{pmatrix}
=
\begin{pmatrix}
A_m & B_m \\
C_m & D_m
\end{pmatrix}
\begin{pmatrix}
x(t) \\ u(t)
\end{pmatrix},
\end{equation}
where
\begin{equation}
\nonumber
\begin{pmatrix}
A_m & B_m \\
C_m & D_m
\end{pmatrix}=
\begin{pmatrix}
\begin{array}{ccc|cc}
0 & 1 & 0 & 1 & 0 \\
-1 & 0 & 0 & 0 & 1 \\
0 & 0 & 1 & 1 & 0 \\ \hline
1 & 0 & 1 & 0 & 0  \\
0 & 1 & 1 & 0 & 0
\end{array}
\end{pmatrix}.
\end{equation}
Note that for the considered system $n=3$, $p=2$, $l=2$, thus, it holds that $n<pl$.

To the AR model we add the disturbance input $w(t) \in \mathbb{R}$ associated with matrix
\begin{equation}
\nonumber
B_w=
\begin{pmatrix}
0 \\
1
\end{pmatrix},
\end{equation}
for which it holds that  $\text{im}(\mathcal{C}(A_z,\hat{B}))\subseteq \text{im}(\mathcal{C}(A_z,B_z))$. In this case, the state $\chi(t)$ cannot span the whole space, and we can use persistently exciting  input $u$ to achieve Assumption~\ref{Assumption-1}.

The considered performance output $z(t)$ is defined by the following matrices
\begin{equation}
\nonumber
C_z:= \begin{pmatrix} 1 & 0 & 0 & 0 & 0 & 0 & 0 & 0\end{pmatrix}
,\quad 
D_z=0, \quad  
\tilde{D}= 
-1.  \nonumber
\end{equation}

This system model is used to generate exact and noisy input-output data.
The disturbance bound and the method for generating the noisy input-output data are the same as in \cite{c6}.
Input samples and initial condition $\chi(0) \in \mathcal{C}(A_z,\begin{pmatrix} B_z & \hat{B} \end{pmatrix}) $ are generated using a Gaussian distribution with zero mean and unit variance. Disturbance samples are generated in the same manner, but with standard deviation $ \sigma \in (0, 0.01, 0.05, 0.1, 0.2)$ for exact case and noisy cases, respectively.
The disturbance bounds have the following form
\begin{equation}
W_-W_-^\top\preceq 1.35 N \sigma^2 I, \nonumber
\end{equation}
where the matrix $W_-$ is defined as in \eqref{NoiseData}, and $N=32$ represents number of data samples (as in \cite{R22}).
Assumptions~\ref{Assumption-2}, \ref{Assumption1} and \ref{Assumption-3} regarding the data matrices and the disturbance bound were verified. 
For controller synthesis we consider minimizations of $H_\infty$ performance $\gamma$ (as defined in Remark~\ref{RemarkPrvi}) and $H_2$ performance $\mu$.
To solve the synthesis LMIs along with minimization of  $\gamma$ and $\mu$ we used Yalmip \cite{c22} environment in MATLAB, with Mosek as a LMI solver.

 The $H_\infty$ performance $\gamma$ results are presented in Figure~\ref{fig-1}, where by the term frequency response we refer to the largest singular value of the
corresponding transfer function on a particular
frequency $\omega $.  
Numerical results for $H_\infty$ and $H_2$ performance are presented in Table~\ref{Tablica1}. Note that closed-loop performances using model-based and data-based controllers obtained with $\sigma=0$ are the same. Furthermore, the performance of the closed-loop systems gets worse with an increase of noise in the recorded data,  which is to be expected since set of systems consistent with data increases.

\begin{figure}[h!]
\centering
\vspace{-0.5cm}
\includegraphics{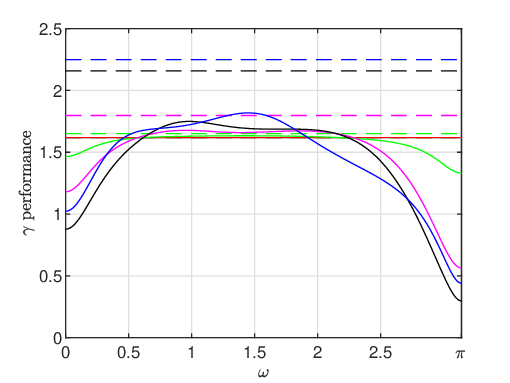}
\caption{Frequency responses (solid lines) of closed-loop system with data-based controller, and associated $H_\infty$ performance bounds (dashed lines) for $\sigma \in (0,0.01, 0.05, 0.1, 0.2)$  in red, green, magenta, black and blue color, respectively.}
\label{fig-1}
\end{figure} 

\begin{table}[h!]
\renewcommand{\arraystretch}{1.3}
\caption{Numerical results for $H_\infty$ and $H_2$ control.}
\begin{center}
\label{Tablica1}
\begin{tabular}{ |c|c|c|c|c| } 
\hline
$\sigma$  & $\gamma$ & $\gamma$ bound  & $\mu$ & $\mu$ bound\\
\hline
0  & 1.618 & 1.618 & 1.414 & 1.414 \\ 
0.01  & 1.633 & 1.649  & 1.414 & 1.431\\ 
0.05   & 1.677 & 1.797 & 1.416 & 1.464 \\ 
0.1   & 1.749  & 2.158 & 1.422 & 1.565	 \\
0.2  & 1.818  & 2.249 & 1.473 & 2.013 \\
\hline
\end{tabular}
\end{center}
\end{table}

\section{CONCLUSIONS}
In this paper we have proposed a novel non-conservative data-driven dynamic output-feedback controller synthesis method for the class of discrete-time LTI systems with closed-loop performance criteria formalized in terms of dissipativity or $H_2$ performance.
In contrast to related works, we overcome common restriction $n=pl$ and completely exploit the structure of constructed closed-loop system realization for the purpose of controller synthesis. 
The presented numerical example illustrates the effectiveness of the proposed method.

\addtolength{\textheight}{-12cm}   


\bibliographystyle{IEEEtran} 
\bibliography{references}

\end{document}